\documentclass[12pt]{article}

\usepackage[usenames]{color}
\usepackage{amssymb}
\usepackage{graphicx}
\usepackage{amscd}

\usepackage[colorlinks=true,
linkcolor=webgreen,
filecolor=webbrown,
citecolor=webgreen]{hyperref}

\definecolor{webgreen}{rgb}{0,.5,0}
\definecolor{webbrown}{rgb}{.6,0,0}

\usepackage{color}
\usepackage{fullpage}
\usepackage{float}

\usepackage{graphics,amsmath,amssymb}
\usepackage{amsthm}
\usepackage{amsfonts}
\usepackage{latexsym}

\def\dis{\displaystyle}
\def\Enn{\mathbb{N}}

\author{Dzmitry Badziahin \footnote{Research supported by EPSRC  Grant EP/L005204/1}\\
Department of Mathematical Sciences\\
Durham University \\
Lower Mountjoy \\
Stockton Rd \\
Durham, DH1 3LE \\
United Kingdom \\
{\tt  dzmitry.badziahin@durham.ac.uk} \\
\and
Jeffrey Shallit \\
School of Computer Science \\
University of Waterloo \\
Waterloo, ON  N2L 3G1 \\
Canada \\
{\tt shallit@cs.uwaterloo.ca}
}

\title{An Unusual Continued Fraction}

\begin{document}

\theoremstyle{plain}
\newtheorem{theorem}{Theorem}
\newtheorem{corollary}[theorem]{Corollary}
\newtheorem{lemma}[theorem]{Lemma}
\newtheorem{proposition}[theorem]{Proposition}

\theoremstyle{definition}
\newtheorem{definition}[theorem]{Definition}
\newtheorem{example}[theorem]{Example}
\newtheorem{conjecture}[theorem]{Conjecture}

\theoremstyle{remark}
\newtheorem{remark}[theorem]{Remark}

\maketitle

\begin{abstract}
We consider the real number $\sigma$ with
continued fraction expansion $[a_0, a_1, a_2,\ldots] =
[1,2,1,4,1,2,1,8,1,2,1,4,1,2,1,16,\ldots]$,
where $a_i$ is the largest power of $2$ dividing $i+1$.
We compute the irrationality measure of $\sigma^2$ and
demonstrate that $\sigma^2$ (and $\sigma$) are both
transcendental numbers.  We also show that certain partial quotients
of $\sigma^2$ grow doubly exponentially, thus confirming a conjecture
of Hanna and Wilson.
\end{abstract}

\section{Introduction}

By a {\it continued fraction\/} we mean an expression of the
form
\begin{eqnarray*}
&&a_0+\frac{1}{\dis
        a_1+\frac{1}{a_2+\cdots +\dis \frac{1}{\dis a_n}}}
    \label{cfeq}
\end{eqnarray*}
or
\begin{eqnarray*}
&&a_0+\frac{1}{\dis
        a_1+\frac{1}{a_2+\cdots +\dis \frac{1}{\dis a_n + \cdots}}}
\label{cfeq2}
\end{eqnarray*}
where $a_1, a_2, \ldots, $ are positive integers and $a_0$ is an integer.
To save space, as usual, we write
$[a_0, a_1, \ldots, a_n]$ for the first
expression and
$[a_0, a_1, \ldots, ]$ for the second.  For properties
of continued fractions, see, for example,
\cite{Hardy&Wright:1985,Borwein:2014}.

It has been known since Euler and Lagrange that a real number has an
ultimately periodic continued fraction expansion if and only if it
is a quadratic irrational.  But the expansions of some other
``interesting'' numbers are also known explicitly.  For example
\cite{Walters:1968},
\begin{eqnarray*}
e &=& [2, (1,2n,1)_{n=1}^\infty] = [2,1,2,1, 1,4,1,1,6,1,1,8,1, \ldots ] \\
e^2 &=& [7,(3n-1,1,1,3n,12n+6)_{n=1}^\infty] = [7,2,1,1,3,18,5,1,1,6,30,\ldots] \\
\tan 1 &=&  [(1,2n-1)_{n=1}^\infty] = [1,1,1,3,1,5,1,7,\ldots] \\
\end{eqnarray*}
These three are examples of ``Hurwitz continued fractions'', where there is a
``quasiperiod'' of terms that grow linearly (see, for example,
\cite{Hurwitz:1896,Matthews&Walters:1970} and \cite[pp.~110--123]{Perron:1954}).
By contrast, no simple pattern is known for the expansions
of $e^3$ or $e^4$.

Recently there has been some interest in understanding the Diophantine
properties of
numbers whose continued fraction
expansion is generated by a simple computational model, such
as a finite automaton.   One famous example is the Thue-Morse sequence
on the symbols $\lbrace a, b \rbrace$ where $\overline{a} = b$ and
$\overline{b} = a$ is given by
$$ {\bf t} = t_0 t_1 t_2 \cdots = abbabaab \cdots$$
and is defined by
$$ t_n = \begin{cases}
    a, & \text{if $n = 0$}; \\
    t_{n/2}, & \text{if $n$ even}; \\
    \overline{t_{n-1}} , & \text{if $n$ odd}.
    \end{cases}
$$
Queff\'elec \cite{Queffelec:1998} proved that if $a, b$ are distinct
positive integers, then the real number $[{\bf t}] = [t_0, t_1, t_2, \ldots ]$
is transcendental.  Later, a simpler proof was found by
Adamczewski and Bugeaud \cite{Adamczewski&Bugeaud:2007}.
Queff\'elec \cite{Queffelec:2000} also proved the transcendence of a
much wider class of automatic continued fractions.    More
recently, in a series of papers, several authors explored the transcendence
properties of automatic, morphic, and Sturmian continued fractions
\cite{Allouche&Davison&Queffelec&Zamboni:2001,Adamczewski&Bugeaud:2005,Adamczewski&Allouche:2007,Adamczewski&Bugeaud:2010,Bugeaud:2012,Bugeaud:2013}.

All automatic sequences (and the more general class of morphic sequences)
are necessarily bounded.  A more general class, allowing unbounded
terms, is the $k$-regular sequences
of integers, for integer $k \geq 2$.
These are sequences $(a_n)_{n \geq 0}$ where the $k$-kernel, defined
by
$$ \lbrace (a_{k^e n + i})_{n \geq 0} \ : e \geq 0, 0 \leq i < k^e \rbrace,$$
is contained in a finitely generated module \cite{Allouche&Shallit:1992,Allouche&Shallit:2003}.  We state the following conjecture:

\begin{conjecture}
Every continued fraction where the terms
form a $k$-regular sequence of positive integers is transcendental or
quadratic.
\end{conjecture}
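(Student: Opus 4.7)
Since the statement is presented as a conjecture (and one that, so far as I know, remains open), any proposal is necessarily speculative; I sketch the strategy I would pursue.

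The plan is to split into cases according to whether the $k$-regular sequence $(a_n)$ is bounded. If $(a_n)$ is bounded, then a standard result of Allouche and Shallit identifies the sequence as $k$-automatic. In that case I would invoke the combinatorial transcendence criterion of Adamczewski and Bugeaud: the sequence is either ultimately periodic (so $[a_0,a_1,\ldots]$ is a quadratic irrational) or it contains infinitely many prefixes admitting a sufficiently long repetition, and in the latter situation one manufactures infinitely many very good quadratic approximations to $[a_0,a_1,\ldots]$ and applies the Schmidt subspace theorem to conclude transcendence. This case essentially reduces to results already in the literature.

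The genuinely new case is when $(a_n)$ is unbounded. Here I would exploit the defining structure of a $k$-regular sequence: a finite family $f_1,\dots,f_d$ of sequences spans every element of the $k$-kernel $\{(a_{k^e n+i})_{n\ge 0}\}$ as an integer linear combination. Translating this into the matrix formalism, with $M_n=\bigl(\begin{smallmatrix}a_n&1\\1&0\end{smallmatrix}\bigr)$ and $M^{(e,i)}=\prod_{n<k^e} M_{k^e n+i}$, one should obtain recurrences that relate convergents $p_n/q_n$ at indices $n$ and $kn$. The hope is that this self-similarity forces a dichotomy: either the partial quotients $a_{n_j}$ along some subsequence grow fast enough that the resulting Liouville-type approximations give transcendence directly, or the $k$-automatic pattern of the $a_n$'s values (read, say, in base $k$) produces long repetitions of blocks and one falls back on an Adamczewski–Bugeaud style criterion, extended to allow unbounded letters whose pattern is $k$-regular.

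The main obstacle, and I expect it to be severe, is the intermediate regime in which partial quotients grow but only sub-exponentially, and in which no obvious long repetitions appear in the sequence itself. Neither the Liouville-style attack nor the known combinatorial criteria are sharp enough to cover such cases, and presumably this is why the conjecture is open. A plausible route forward is to formulate a transcendence criterion tailored to continued fractions whose partial quotients, although unbounded, satisfy the linear-recurrence-in-the-$k$-kernel constraint; the techniques developed elsewhere in this paper for the specific sequence $a_n=\nu_2(n+1)+1$ (largest power of $2$ dividing $n+1$) may well provide a template, since that sequence is the simplest unbounded $2$-regular example and already required substantial work.
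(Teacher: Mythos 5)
The statement you were asked about is labelled a \emph{conjecture} in the paper, and the paper offers no proof of it; the authors only prove transcendence for the single $2$-regular example $\sigma = [1,2,1,4,1,2,1,8,\ldots]$, and they do that indirectly, by showing the irrationality measure of $\sigma^2$ is at least $8/3$ and invoking Roth's theorem. Your proposal correctly recognizes that the general statement is open, and your case split (bounded $k$-regular $\Rightarrow$ $k$-automatic $\Rightarrow$ Adamczewski--Bugeaud / Bugeaud's theorem that automatic continued fractions are transcendental or quadratic) accurately describes the part of the conjecture that is already settled in the literature cited by the paper. So your assessment of the landscape is sound, but what you have written is a research program, not a proof, and it cannot be accepted as one.

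The genuine gap is exactly the one you name yourself: in the unbounded case there is no known criterion that covers partial quotients growing sub-exponentially without producing the long quasi-periodic repetitions needed for a subspace-theorem argument. Nothing in your sketch closes that gap --- the ``hope'' that the $k$-kernel recurrences force a usable dichotomy is unsubstantiated, and the paper itself gives evidence of how delicate the unbounded case is: even for the single ruler sequence, the authors needed the special palindromic structure $u(n+1)=(v(n),2^n,v(n)^R)$ and explicit gcd computations to extract approximations of quality $Q^{-8/3}$, which is barely past the Roth threshold of $2$; their concluding remarks extend this only to sequences $s_i=f(\nu_2(i+1))$ with $f(0)=1$ and $f(n+1)=2f(n)$ infinitely often, a very thin subfamily of $2$-regular sequences. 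One small correction: the paper's sequence is $s_i=2^{\nu_2(i+1)}$ (the largest power of $2$ dividing $i+1$), not $\nu_2(i+1)+1$ as written in your last paragraph.
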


In this paper we study a particular example of a $k$-regular sequence:
$$ {\bf s} = s_0 s_1 s_2 \cdots = (1,2,1,4,1,2,1,8, \ldots) $$
where $s_i = 2^{\nu_2(i+1)}$ and $\nu_p(x)$ is the $p$-adic
valuation of $x$ (the exponent of the largest power of $p$ dividing
$x$).  To see that ${\bf s}$ is $2$-regular, notice that every
sequence in the $2$-kernel is a linear combination of ${\bf s}$
itself and the constant sequence $(1,1,1,\ldots)$.

The
corresponding real number $\sigma$ has continued fraction expansion
$$ \sigma = [{\bf s}] = [s_0, s_1, s_2, \ldots] = [1,2,1,4,1,2,1,8,\ldots]
= 1.35387112842988237438889 \cdots .$$
The sequence $\bf s$ is sometimes called the ``ruler sequence'', and is
sequence A006519 in Sloane's {\it Encyclopedia of Integer Sequences}
\cite{Sloane}.  The decimal expansion of $\sigma$ is sequence
A100338.

Although $\sigma$ has slowly growing partial quotients (indeed,
$s_i \leq i+1$ for all $i$), empirical calculation for $\sigma^2 =
1.832967032396003054427219544210417324 \cdots$
demonstrates the appearance
of some exceptionally large partial quotients.  For
example, here are the first few terms:
\begin{multline*}
\sigma^2 = [1, 1, 4, 1, 74, 1, 8457, 1, 186282390, 1, 1, 1, 2, 1, 430917181166219, \\
11, 37, 1, 4, 2, 41151315877490090952542206046,
11, 5, 3, 12, 2, 34, 2, 9, 8, 1, 1, 2, 7, \\
13991468824374967392702752173757116934238293984253807017, \ldots]
\end{multline*}
The terms of this continued fraction form sequence A100864 in
\cite{Sloane}, and were apparently first noticed by Paul D. Hanna
and Robert G. Wilson in November 2004.  The very large terms
form sequence A100865 in \cite{Sloane}.  In this note, we explain
the appearance of these extremely large partial quotients. The
techniques have some similarity with those of Maillet
(\cite{Maillet:1906} and \cite[\S 2.14]{Borwein:2014}).

Throughout the paper we use the following conventions.  Given a real
irrational number $x$ with partial quotients
$$x = [a_0, a_1, a_2, \ldots ]$$
we define the sequence of convergents by
\begin{align*}
p_{-2} = 0  & \quad & p_{-1} = 1 & \quad & p_n = a_n p_{n-1} + p_{n-2} & \quad (n \geq 0) \\
q_{-2} = 1  & \quad & q_{-1} = 0 & \quad & q_n = a_n q_{n-1} + q_{n-2} & \quad (n \geq 0) \\
\end{align*}
and then
$$[a_0, a_1, \ldots, a_n] = {{p_n} \over {q_n}}.$$

The basic idea of this paper is to use the following classical
estimate:
\begin{equation}\label{eq_approx}
\frac{1}{(a_{n+1}+2)q_n^2}<\left|x - \frac{p_n}{q_n}\right|
<\frac{1}{a_{n+1}q_n^2}.
\end{equation}
Therefore, in order to show that some partial quotients of $x$ are
huge, it is sufficient to find convergents $p_n/q_n$ of $x$
such that $|x-p_n/q_n|$ is much smaller than $q_n^{-2}$. We quantify
this idea in Section~\ref{sec_3}.

Furthermore, we use the Hurwitz-Kolden-Frame representation of continued
fractions \cite{Hurwitz&Kritikos:1986,Kolden:1949,Frame:1949} via $2 \times 2$ matrices, as follows:
\begin{equation}
M(a_0, \ldots, a_n) :=
\left[ \begin{array}{cc} a_0 & 1 \\ 1 & 0  \end{array} \right]
\left[ \begin{array}{cc} a_1 & 1 \\ 1 & 0  \end{array} \right]
\cdots
\left[ \begin{array}{cc} a_n & 1 \\ 1 & 0  \end{array} \right] =
\left[ \begin{array}{cc} p_n & p_{n-1} \\ q_n & q_{n-1}  \end{array}\right] .
\label{hur}
\end{equation}

By taking determinants we immediately deduce the classical identity
\begin{equation}
p_n q_{n-1} - p_{n-1} q_n = (-1)^{n+1}
\label{unit}
\end{equation}
for $n \geq 0$.

Given a finite sequence $z = (a_0, \ldots, a_n)$ we let $z^R$ denote
the reversed sequence $(a_n, \ldots, a_0)$.  A sequence is a {\it
palindrome} if $z = z^R$.  By taking the transpose of
Eq.~\eqref{hur} it easily follows that
\begin{equation}
M(a_n, \ldots, a_0) :=
\left[ \begin{array}{cc} a_n & 1 \\ 1 & 0  \end{array} \right]
\cdots
\left[ \begin{array}{cc} a_1 & 1 \\ 1 & 0  \end{array} \right]
\left[ \begin{array}{cc} a_0 & 1 \\ 1 & 0  \end{array} \right] =
\left[ \begin{array}{cc} p_n & q_n \\ p_{n-1} & q_{n-1}  \end{array}\right] .
\label{hur2}
\end{equation}
Hence if
$$[a_0, a_1, \ldots, a_n] = p_n/q_n$$
then
$$[a_n, \ldots, a_1, a_0] = p_n/p_{n-1}.$$

We now briefly mention ultimately periodic continued fractions.
By an expression of the form $[x, \overline{w}]$, where $x$ and $w$
are finite strings, we mean the continued fraction
$[x, w,w, w, \ldots]$, where the overbar or ``vinculum'' denotes the
repeating portion.  Thus, for example,
$$ \sqrt{7} = [2, \overline{1,1,1,4} \, ] = [2,1,1,1,4,1,1,1,4,1,1,1,4, \ldots].$$

We now recall a classical result.

\begin{lemma}
Let $a_0$ be a positive integer and $w$ denote a finite palindrome of
positive integers.  Then there exist positive integers $p, q$ such that
$$ [a_0, \overline{w, 2a_0} \,] = \sqrt{{p \over q}}.$$
\label{sqrt}
\end{lemma}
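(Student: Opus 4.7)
The plan is to exploit the purely periodic tail of $x$ together with the palindromic structure of $w$ to derive a quadratic for $x$ in which the linear coefficient vanishes---which is precisely what it means for $x^2$ to be rational.

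Set $x = [a_0, \overline{w, 2a_0}\,]$ and $\beta = [\overline{w, 2a_0}\,]$, so $x = a_0 + 1/\beta$. Using the Hurwitz-Kolden-Frame representation, I would form the product $M(w, 2a_0) = M(w)\cdot\left(\begin{smallmatrix} 2a_0 & 1 \\ 1 & 0\end{smallmatrix}\right)$. Because $w$ is a palindrome, comparing Eq.~\eqref{hur} with Eq.~\eqref{hur2} forces $M(w)$ to be symmetric, so I may write $M(w) = \left(\begin{smallmatrix} p & r \\ r & s\end{smallmatrix}\right)$ for nonnegative integers with $p > 0$. A direct multiplication then gives
$$M(w, 2a_0) = \begin{pmatrix} 2a_0 p + r & p \\ 2a_0 r + s & r\end{pmatrix} = \begin{pmatrix} P & P' \\ Q & Q'\end{pmatrix}.$$

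Since $\beta$ is the fixed point of the M\"obius transformation of this matrix, it satisfies $Q\beta^2 + (Q' - P)\beta - P' = 0$. The key observation is that the symmetry of $M(w)$ makes $Q' - P = r - (2a_0 p + r) = -2a_0 p$, so the quadratic becomes
$$(2a_0 r + s)\beta^2 - 2a_0 p\,\beta - p = 0.$$
Now substitute $\beta = 1/(x - a_0)$ and multiply through by $(x-a_0)^2$. On expanding, the first-power terms in $x$ cancel identically, yielding
$$p\,x^2 = a_0^2\, p + 2a_0\, r + s,$$
so $x = \sqrt{(a_0^2 p + 2a_0 r + s)/p}$, with both numerator and denominator positive integers.

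The one non-mechanical step is the observation that a palindromic $w$ forces $M(w)$ to be symmetric via the transpose identity of Eq.~\eqref{hur2}. That symmetry is what arranges the coefficient of $\beta$ in the fixed-point quadratic to combine with the constant term so as to kill the $x$-linear coefficient after the substitution $\beta = 1/(x-a_0)$; once this is in place, the rest of the proof is bookkeeping, and I do not anticipate any genuine obstacle beyond it.
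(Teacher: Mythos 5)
Your proof is correct and rests on the same key idea as the paper's: a palindromic $w$ forces $M(w)$ to be symmetric via the transpose identity, and that symmetry is exactly what kills the linear term and leaves a pure square root. The only difference is cosmetic --- the paper encodes the periodicity by writing $y = [a_0, w, a_0+y]$ and computing $M(a_0)M(w)M(a_0+y)$ directly, whereas you take the fixed-point quadratic of the tail $[\overline{w,2a_0}\,]$ and then substitute; the algebra is the same rearranged.
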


\begin{proof}
Define $y := [a_0, \overline{w, 2a_0} \,]$.  Then
$y = [a_0, w, a_0+y]$.  Letting
$$M(w) = \left[ \begin{array}{cc} \alpha & \beta \\
    \gamma & \delta \end{array}\right],$$
the corresponding matrix representation for $y$ is
$$
\left[ \begin{array}{cc} a_0 & 1 \\ 1 & 0 \end{array} \right]
\left[ \begin{array}{cc} \alpha & \beta \\ \gamma & \delta \end{array} \right]
\left[ \begin{array}{cc} a_0+y & 1 \\ 1 & 0 \end{array} \right]
=
\left[ \begin{array}{cc} (a_0\alpha+\gamma)(a_0+y) + a_0\beta + \delta & a_0
    \alpha + \gamma \\
    \alpha(a_0+y) + \beta & \alpha \end{array} \right].
$$
Since $w$ is a palindrome, it follows from Eq.~\eqref{hur} and \eqref{hur2}
that $\beta = \gamma$.  Hence
$$y = { {(a_0 \alpha + \beta)(a_0+y) + a_0\beta + \delta} \over
    {\alpha(a_0+y) + \beta}} .$$
Solving for $y$, which is clearly positive, we have
$$ y = \sqrt{ {p \over q}}$$
where $p = a_0^2 \alpha + 2 a_0 \beta + \delta$ and
$q = \alpha$, as desired.
\end{proof}

\section{Three sequences}

We now define three related sequences for $n \geq 2$:
\begin{eqnarray*}
u(n) &=& (s_1, s_2, \ldots, s_{2^n - 3}) \\
v(n) &=& (s_1, s_2, \ldots, s_{2^n - 2}) = (u(n),1) \\
w(n) &=& (s_1, s_2, \ldots, s_{2^n - 3}, 2) = (u(n), 2) .
\end{eqnarray*}
The following table gives the first few values of these quantities:

\begin{center}
\begin{tabular}{cccc}
$n$ & $u(n)$ & $v(n)$ & $w(n)$  \\
\hline
2 & 2 & 21 & 22  \\
3 & 21412 & 214121 & 214122  \\
4 & 2141218121412 & 21412181214121 & 21412181214122  \\
\end{tabular}
\end{center}

The following proposition, which is easily proved by induction,
gives the relationship between these sequences, for $n \geq 2$:
\begin{proposition}
\mbox{}
\begin{itemize}
\item[(a)] $u(n+1) = (v(n), 2^n, v(n)^R)$;
\item[(b)] $u(n)$ is a palindrome;
\item[(c)] $v(n+1) = (v(n), 2^n, 1, v(n))$.
\end{itemize}
\label{nice}
\end{proposition}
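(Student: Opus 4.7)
The plan is to prove all three parts simultaneously by induction on $n$, with base case $n = 2$ read off directly from the table. In the inductive step, item (a) is the substantive claim; items (b) and (c) then follow essentially for free.

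For (a), I want to show $u(n+1) = (v(n), 2^n, v(n)^R)$. The length matches, since $2(2^n - 2) + 1 = 2^{n+1} - 3 = |u(n+1)|$, so I only need to compare entries in three blocks. The first $2^n - 2$ entries of $u(n+1)$ are $s_1, \ldots, s_{2^n - 2} = v(n)$ by definition. The middle entry is $s_{2^n - 1} = 2^{\nu_2(2^n)} = 2^n$, as required. For the last $2^n - 2$ entries, I need $s_{2^n + k} = s_{2^n - 2 - k}$ for $0 \le k \le 2^n - 3$. Writing $m = 2^n - 1 - k$, so that $1 \le m < 2^n$ and $2^n + k + 1 = 2^{n+1} - m$, this reduces to the valuation identity $\nu_2(2^{n+1} - m) = \nu_2(m)$ for $0 < m < 2^n$, which holds because $\nu_2(m) \le n - 1 < n + 1 = \nu_2(2^{n+1})$. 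This $2$-adic symmetry of the ruler sequence around powers of $2$ is the crux of the whole proposition and the only real obstacle.

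Item (b) is then immediate: the right-hand side in (a) is palindromic by construction, so $u(n+1)$ is a palindrome. The base case $u(2) = (2)$ is trivially a palindrome.

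For (c), I would combine the pieces. By definition $v(n+1) = (u(n+1), 1)$, and by (a) this equals $(v(n), 2^n, v(n)^R, 1)$. Since $v(n) = (u(n), 1)$ and $u(n)$ is a palindrome by (b), one has $v(n)^R = (1, u(n)^R) = (1, u(n))$, so $(v(n)^R, 1) = (1, u(n), 1) = (1, v(n))$. Substituting yields $v(n+1) = (v(n), 2^n, 1, v(n))$, as required. Thus the only genuine computation in the whole argument is the $p$-adic identity underlying (a); the rest is bookkeeping about palindromes and concatenation.
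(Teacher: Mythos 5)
Your proof is correct: the key identity $\nu_2(2^{n+1}-m)=\nu_2(m)$ for $0<m<2^n$ is exactly the $2$-adic symmetry that makes (a) work, and your derivations of (b) and (c) from it are sound. The paper offers no written proof (it only asserts the proposition is ``easily proved by induction''), and your inductive verification is precisely the argument the authors are alluding to.
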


Furthermore, we can define the sequence of associated matrices with
$u(n)$ and $v(n)$:
\begin{eqnarray*}
M(u(n)) & := & \left[ \begin{array}{cc}
    c_n & e_n \\
    d_n & f_n \end{array} \right] \\
M(v(n)) & :=  & \left[ \begin{array}{cc}
    w_n & y_n \\
    x_n & z_n \end{array} \right] .
\end{eqnarray*}

The first few values of these arrays are given in the following
table.  As $d_n = e_n = z_n$ and $c_n = y_n$ for $n \geq 2$,
we omit the duplicate values.

\begin{center}
\begin{tabular}{c|cccccc}
$n$ & $c_n$ & $d_n$ & $f_n$ & $w_n$ & $x_n$  \\
\hline
2 & 2 &  1 & 0 & 3 & 1 & \\
3 & 48  & 17 & 6 & 65 & 23 & \\
4 & 40040 & 14169 & 5014 & 54209 & 19183 \\
5 & 51358907616 & 18174434593 & 6431407678  &
     69533342209 & 24605842271 \\
\end{tabular}
\end{center}

If we now define
$$ \sigma_n = [1, \overline{w(n)}\, ] $$
then Lemma~\ref{sqrt} with $a_0 = 1$ and $w = u(n)$ gives
$$ \sigma_n = \sqrt{ {{c_n + 2e_n + f_n} \over {c_n}}} .$$

Write $\sigma = [s_0, s_1, \ldots]$ and
$[s_0, s_1,\ldots, s_n] = {{p_n} \over {q_n}}$.
Furthermore define
$\hat{\sigma}_n = [1, u(n)]$.  Notice that
$\sigma$, $\sigma_n$, and $\hat{\sigma}_n$ all agree on the
first $2^n - 2$ partial quotients.  We have
$$\left| \sigma - \hat{\sigma}_n \right| < {1 \over{q_{2^n-3} q_{2^n - 2}}}$$
by a classical theorem on continued fractions
(e.g., \cite[Theorem 171]{Hardy&Wright:1985}), and furthermore,
since $s_{2^n - 3} = 2$, $s_{2^n-2} = 1$, we have, for $n \geq 3$, that
$$ \sigma < \sigma_n < \hat{\sigma}_n.$$
Hence
$$\left| \sigma - \sigma_n \right| < {1 \over{q_{2^n-3} q_{2^n - 2}}}.$$

Now by considering
$$
M(1) M(u(n)) M(1) = \left[ \begin{array}{cc} 1  & 1 \\ 1 & 0
\end{array} \right] \left[ \begin{array}{cc} c_n  & e_n \\ d_n & f_n
\end{array} \right] \left[ \begin{array}{cc} 1  & 1 \\ 1 & 0
\end{array} \right] = \left[ \begin{array}{cc} 2c_n+d_n  & e_n+f_n
\\ c_n+d_n & c_n \end{array} \right],
$$
we see that $q_{2^n-3} = c_n$ and $q_{2^n-2} = c_n + d_n$. For
simplicity write $g_n = c_n + 2e_n + f_n$. Then
$$\left| \sigma - \sigma_n \right|
= \left| \sigma - \sqrt{{g_n} \over {c_n}} \right| < {1 \over {c_n^2}},$$
and so
$$ \left| \sigma^2 - {{g_n} \over {c_n}} \right| =
\left| \sigma - \sqrt{{g_n} \over {c_n}} \right| \cdot
\left| \sigma + \sqrt{{g_n} \over {c_n}} \right| < {3 \over {c_n^2}}.$$
So we have already found good approximations
of $\sigma^2$ by rational numbers.  In the next section we will show that
$g_n$ and $c_n$ have a large common factor, which will improve the
quality of the approximation.

\section{Irrationality measure of $\sigma^2$}\label{sec_3}

From Proposition~\ref{nice} (a), we get that the matrix
$$
\left[ \begin{array}{cc} c_{n+1}
& e_{n+1} \\ d_{n+1} & f_{n+1} \end{array} \right ]
$$
associated with
$u(n+1)$ is equal to the matrix
associated with $(v(n), 2^n, v(n)^R)$, which is
$$
\left[ \begin{array}{cc} w_n & y_n \\ x_n & z_n \end{array} \right ]
\left[ \begin{array}{cc} 2^n & 1 \\ 1 & 0 \end{array} \right ]
\left[ \begin{array}{cc} w_n & x_n \\ y_n & z_n \end{array} \right ]
=
\left[ \begin{array}{cc} 2^n w_n^2 + 2w_n y_n  & 2^n w_n x_n + x_n y_n + w_n z_n \\
2^n w_n x_n + x_n y_n + w_n z_n & 2^n x_n^2 + 2x_n z_n \end{array} \right ].
$$

Notice that
\begin{equation}
c_{n+1} = (2^n w_n + 2y_n) w_n.
\label{cn}
\end{equation}
On the other hand, we have
\begin{eqnarray*}
g_{n+1} &=& c_{n+1} + 2d_{n+1} + f_{n+1} \\
&=&  c_{n+1} + 2 (2^n w_n x_n + x_n y_n + w_n z_n) +  2^n x_n^2 + 2x_n z_n \\
&=&  c_{n+1} + 2 (2^n w_n x_n + x_n y_n + w_n z_n) +  2^n x_n^2 + 2x_n z_n
    + 2 ( x_n y_n - w_n z_n + 1) \\
&=&  c_{n+1} + (2^n w_n + 2y_n) 2 x_n + 2^n x_n^2 + 2x_n z_n + 2  \\
&=& (2^n w_n + 2y_n) (2x_n + w_n) + 2^n x_n^2 + 2x_n z_n + 2 ,
\end{eqnarray*}
where we have used Eq.~\eqref{unit}.
By Euclidean division, we get
$$\gcd(g_{n+1}, 2^n w_n + 2y_n) =
\gcd(2^n w_n + 2y_n, 2^n x_n^2 + 2x_n z_n + 2).$$

Next, we interpret Proposition~\ref{nice} (c) in terms of matrices.
We get that the matrix
$$
\left[ \begin{array}{cc} w_{n+1}
& y_{n+1} \\ x_{n+1} & z_{n+1} \end{array} \right ]
$$
associated with
$v(n+1)$ is equal to the matrix
associated with $(v(n), 2^n, 1, v(n))$, which is
\begin{eqnarray}
&& \left[ \begin{array}{cc} w_n & y_n \\ x_n & z_n \end{array} \right]
\left[ \begin{array}{cc} 2^n & 1 \\ 1 & 0 \end{array} \right]
\left[ \begin{array}{cc} 1 & 1 \\ 1 & 0 \end{array} \right]
\left[ \begin{array}{cc} w_n & y_n \\ x_n & z_n \end{array} \right] \nonumber\\
&=&
\left[ \begin{array}{cc}   (2^n+1)w_n^2 + 2^n w_n x_n + y_n(w_n+x_n) &
(2^n + 1) w_n y_n + 2^n w_n z_n + y_n(y_n + z_n)\\
(2^n + 1) x_n w_n + 2^n x_n^2 + z_n (w_n + x_n) &
(2^n + 1) x_n y_n + 2^n x_n z_n + z_n (y_n + z_n)
\end{array} \right] . \label{eq_wn}
\end{eqnarray}

Letting $r_n := 2^n (w_n + x_n) + w_n + y_n + z_n$, we see that
\begin{eqnarray}
2^{n+1} w_{n+1} + 2y_{n+1}
&=& 2^{n+1} (  (2^n+1)w_n^2 + 2^n w_n x_n + y_n(w_n+x_n)) + \nonumber \\
&& \quad\quad 2 ( (2^n + 1) w_n y_n + 2^n w_n z_n + y_n(y_n + z_n) ) \nonumber \\
&=& 2(2^n w_n + y_n)(2^n (w_n + x_n) + w_n + y_n + z_n)  \nonumber \\
&=& 2 (2^n w_n + y_n) r_n.   \label{nn}
\end{eqnarray}

Now
\begin{eqnarray*}
x_{n+1} &=&  (2^n + 1) x_n w_n + 2^n x_n^2 + z_n (w_n + x_n) \\
&=& x_n (2^n (w_n + x_n) + w_n + y_n + z_n) + w_nz_n - x_n y_n \\
&=& x_n r_n + 1
\end{eqnarray*}
and
\begin{eqnarray*}
z_{n+1} &=& (2^n + 1) x_n y_n + 2^n x_n z_n + z_n (y_n + z_n) \\
&=& z_n (2^n (w_n + x_n) + w_n + y_n + z_n) + (2^n+1)(x_n y_n - w_n z_n) \\
&=& z_n r_n - 2^n - 1.
\end{eqnarray*}

It now follows, from some tedious algebra, that
\begin{multline}
{{2^n x_{n+1}^2 + x_{n+1} z_{n+1} + 1} \over
    {r_n}} =
(2^n + 1) w_n x_n z_n + 2^n(2^n + 1)w_n x_n^2 + z_n + (2^n-1)x_n  \\
    + 2^n x_n^2 y_n + 2^{n+1} x_n^2 z_n + x_n y_n z_n +
    2^{2n} x_n^3 + x_n z_n^2 .
\label{tedious}
\end{multline}

From Eq.~\eqref{cn} and reindexing, we get
\begin{eqnarray*}
c_{n+2} &=& w_{n+1} (2^{n+1} w_{n+1} + 2 y_{n+1}) \\
&=& 2 w_{n+1} (2^n w_n + y_n) r_n,
\end{eqnarray*}
where we used Eq.~\eqref{nn}.    Also, from the argument above
about gcd's and Eq.~\eqref{tedious}, we see that $2 r_n \ | \ g_{n+2}$.
Hence for $n \geq 2$ we have
$$ {{g_{n+2}}\over {c_{n+2}}} = {{P_{n+2}} \over {Q_{n+2}}} $$
for integers $P_{n+2} := {{g_{n+2}} \over {2r_n}}$ and
$Q_{n+2} := w_{n+1} (2^n w_n + y_n)$.
It remains to see
that $P_{n+2}/Q_{n+2}$ are particularly good rational
approximations to $\sigma^2$.

Since $w_n/x_n$ and $y_n/z_n$ denote successive convergents
to a continued fraction, we clearly have
$w_n \geq x_n$, $w_n \geq y_n$, and $w_n \geq z_n$.  It follows
that
\begin{eqnarray*}
Q_{n+2} &=& w_{n+1} (2^n w_n + y_n) \\
&=&  ((2^n+1)w_n^2 + 2^n w_n x_n + y_n(w_n+x_n)) (2^n w_n + y_n)  \\
&\leq & (2^{n+1} +3) w_n^2 \cdot (2^n + 1) w_n  \\
&=& (2^{n+1} + 3)(2^n + 1)w_n^3.
\end{eqnarray*}
On the other hand,
\begin{eqnarray*}
c_{n+2} &=& 2 Q_{n+2} r_n  \\
&>& 2(2^n+1) w_n^2 \cdot 2^n w_n\cdot (2^n+1) w_n  = 2^{n+1} (2^n+1)^2 w_n^4 \\
&\geq & Q_{n+2}^{4/3} {{2^{n+1} (2^n + 1)^2} \over
    {((2^{n+1} + 3)(2^n + 1))^{4/3}}} \\
& > & Q_{n+2}^{4/3}.
\end{eqnarray*}
This gives
$$ \left| \sigma^2 - {{P_{n+2}} \over {Q_{n+2}}}  \right| < Q_{n+2}^{-8/3}$$
for all integers $n \geq 2$.

The result we have just shown can be nicely formulated in terms of
the irrationality measure. Recall that the {\it irrationality
measure} of a real number $x$ is defined to be the infimum, over all
real $\mu$, for which the inequality
$$ \left| x - {p \over q} \right| < {1 \over {q^\mu}} $$
is satisfied by at most finitely many integer pairs $(p,q)$.

\begin{theorem}\label{th_1}
The irrationality measure of $\sigma^2$ is at least $8/3$.
\end{theorem}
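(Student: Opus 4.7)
The proof is essentially a direct packaging of the work already done in Section~\ref{sec_3}. My plan is to invoke the definition of irrationality measure and show that for every $\mu < 8/3$, the inequality
$$\left| \sigma^2 - \frac{p}{q} \right| < \frac{1}{q^\mu}$$
has infinitely many integer solutions $(p,q)$. Since the infimum taken in the definition is over those $\mu$ for which only finitely many solutions exist, this will prove $\mu(\sigma^2) \geq 8/3$.

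The candidates are the family $\{(P_{n+2}, Q_{n+2})\}_{n \geq 2}$ produced in Section~\ref{sec_3}. For each $n \geq 2$ we already have
$$\left| \sigma^2 - \frac{P_{n+2}}{Q_{n+2}} \right| < Q_{n+2}^{-8/3}.$$
Fix $\mu < 8/3$. Since $Q_{n+2} \geq 1$ and $8/3 - \mu > 0$, we have $Q_{n+2}^{-8/3} \leq Q_{n+2}^{-\mu}$, so each pair $(P_{n+2}, Q_{n+2})$ gives a solution of $|\sigma^2 - p/q| < q^{-\mu}$. This part is immediate.

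What remains is to verify that infinitely many of these pairs are genuinely distinct, i.e., that they supply infinitely many solutions (not the same rational repeated). For this I would argue $Q_{n+2} \to \infty$ as $n \to \infty$: from $Q_{n+2} = w_{n+1}(2^n w_n + y_n)$ and the recurrences in Section~\ref{sec_3}, the entries of $M(v(n))$ grow with $n$ (indeed $w_n$ grows doubly exponentially in view of the quadratic terms appearing in Eq.~\eqref{eq_wn}). So the denominators $Q_{n+2}$ take infinitely many distinct positive integer values, and therefore the pairs $(P_{n+2}, Q_{n+2})$ are infinitely many as ordered pairs. The conclusion $\mu(\sigma^2) \geq 8/3$ then follows directly from the definition.

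The only mild obstacle is this distinctness/unboundedness check for $Q_{n+2}$; otherwise the theorem is an immediate corollary of the approximation bound already established.
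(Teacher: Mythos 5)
Your proposal is correct and is essentially the paper's own argument: the paper derives the bound $\left|\sigma^2 - P_{n+2}/Q_{n+2}\right| < Q_{n+2}^{-8/3}$ for all $n \geq 2$ and then simply states the theorem as a reformulation, leaving implicit exactly the routine bookkeeping (comparing $Q^{-8/3}$ with $Q^{-\mu}$ and checking that the pairs are infinitely many because $Q_{n+2} \to \infty$) that you spell out. No difference in substance.
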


Note that the classical Khintchine theorem (e.g.,
\cite[Chapter VII, Theorem I]{Cassels:1957})
states that for almost
all real numbers (in terms of Lebesgue measure), the
irrationality exponent equals two. Hence Theorem~\ref{th_1} says
that the number $\sigma^2$ belongs to a very tiny set of zero
Lebesgue measure.

Furthermore, the famous Roth theorem \cite{Roth:1955} states that
the irrationality exponent of every irrational algebraic number is
two. Therefore we conclude that $\sigma^2$ (and hence $\sigma$) are
transcendental numbers.

We now provide a lower bound for some very
large partial quotients of $\sigma^2$. For each $n\ge 2$ we certainly
have
$$
\left|\sigma - \frac{P_{n+2}}{Q_{n+2}}\right| <Q^{-8/3}_{n+2}
<\frac{1}{2Q_{n+2}^2}.
$$
In particular this implies that the rational
number $P_{n+2}/Q_{n+2}$ is a convergent of
$\sigma^2$.

Notice that $P_{n+2}$ and $Q_{n+2}$ are
not necessarily relatively prime.
Let $\tilde{P}_{n+2}/\tilde{Q}_{n+2}$ denote the reduced
fraction of $P_{n+2}/Q_{n+2}$.
If $\tilde{P}_{n+2}/\tilde{Q}_{n+2}$ is the $m$'th convergent
of $\sigma^2$, then define $A_{n+2}$ to be the $(m+1)$'th partial
quotient of $\sigma^2$.
Then the estimate~\eqref{eq_approx}
implies
$$
\frac{1}{(A_{n+2}+2)\tilde{Q}_{n+2}^2} <
\left|\sigma^2-\frac{\tilde{P}_{n+2}}{\tilde{Q}_{n+2}} \right| <
\frac{3}{c_{n+2}^2} \le \frac{3}{4r_n^2\tilde{Q}_{n+2}^2},
$$
Hence $A_{n+2} \ge 4r_n^2 - 2$.

From the formula for $r_n$ and the inequalities $w_n\ge x_n, w_n\ge
y_n, w_n\ge z_n$ one can easily derive
$$
(2^n+1)w_n\le r_n\le (2^{n+1}+3)w_n.
$$
This, together with the formula~\eqref{eq_wn} for $w_{n+1}$, gives
the estimate
$$
r_{n+1} \ge (2^{n+1}+1)w_{n+1}\ge (2^{n+1}+1)(2^n+1)w_n^2 > r_n^2+1.
$$
Therefore the sequence $4r^2_n-2$, and in turn $A_{n+2}$, grow doubly
exponentially. This phenomenon explains the observation of Hanna and
Wilson for the sequence A100864 in~\cite{Sloane}.

The first few values of the sequences we have been discussing are
given below:
\begin{center}
\begin{tabular}{c|ccccccc}
$n$ & $\sigma_n^2$ & $\hat{\sigma}_n$ & $g_n$ & $r_n$ & $P_n$ & $Q_n$ & $A_n$ \\
\hline
3 & ${{11} \over 6}$ & ${{65}\over{48}}$ & 88 & 834 & 11 & 6 & 74 \\
\ \\
4 & ${{834}\over {455}}$ & ${{54209} \over {40040}}$ & 73392 & 1282690 & 1668 & 910 & 8457 \\
\ \\
5 & ${{7054795}\over {3848839}}$ & ${{69533342209} \over {5135807616}}$ & 94139184480 & 3151520587778 & 56438360 & 30790712 & 186282390
\end{tabular}
\end{center}

\section{Additional remarks}

The same idea can be used to bound the irrationality exponent of an
infinite collection of numbers $\sigma = [s_0,s_1,s_2,\ldots,]$.
Indeed, there is nothing particularly special about the terms $2^n$
appearing in Proposition~\ref{nice}. One can check that the same
result holds if the strings $u(n)$ and $v(n)$ satisfy the following
modified
properties from Proposition~\ref{nice} for infinitely many numbers
$n\ge 2$:
\begin{itemize}
\item[(a')] $u(n+1) = (v(n), k_n, v(n)^R)$;
\item[(b')] $u(n+2)$ is a palindrome;
\item[(c')] $v(n+2) = (v(n+1), 2k_n, 1, v(n+1))$.
\end{itemize}
In particular one can easily check these properties for a string $
{\bf s} = s_0s_1s_2\cdots$ such that $s_i = f(\nu_2(i+1))$ where $f:
\Enn \rightarrow \Enn$ is a function satisfying the following
conditions:
\begin{enumerate}
\item $f(0) = 1$;
\item $f(n+1) = 2f(n)$ for infinitely many positive integers $n$.
\end{enumerate}

%
%
%
%

\end{document}